\newtheorem{theorem}{Theorem}[section]
\newtheorem{corollary}{Corollary}[section]
\newtheorem{lemma}{Lemma}[section]
\newenvironment{proof}[1][Proof]{\noindent\textbf{#1.} }{\ \rule{0.5em}{0.5em}}
\begin{document}

\title{Iterated binomial transform of the $k$-Lucas sequence}
\author{\texttt{Nazmiye Yilmaz} \: and \: \texttt{Necati Taskara}}
\date{Department of Mathematics, Faculty of Science,\\
Selcuk University, Campus, 42075, Konya - Turkey \\
[0.3cm] \textit{nzyilmaz@selcuk.edu.tr} \: and \: \textit{%
ntaskara@selcuk.edu.tr}}
\maketitle

\begin{abstract}
In this study, we apply "$r$" times the binomial transform to $k$-Lucas
sequence. Also, the Binet formula, summation, generating function of this
transform are found using recurrence relation. Finally, we give the
properties of iterated binomial transform with classical Lucas sequence.

\textbf{Keywords}: $k$-Lucas sequence, iterated binomial transform, Pell
sequence.
\end{abstract}

\section{Introduction and Preliminaries}

There are so many studies in the literature that concern about the special
number sequences such as Fibonacci, Lucas and generalized Fibonacci anad
Lucas numbers (see, for example [1-4], and the references cited therein). In
Fibonacci and Lucas numbers, there clearly exists the term Golden ratio
which is defined as the ratio of two consecutive of these numbers that
converges to $\alpha =\frac{1+\sqrt{5}}{2}$. It is also clear that the ratio
has so many applications in, specially, Physics, Engineering, Architecture,
etc.\cite{Marek-Crnjac,Marek-Crnjac1}.

For $n\geq 1$, $k$-Lucas sequence is defined by the recursive equation:%
\begin{equation}
L_{k,n+1}=kL_{k,n}+L_{k,n-1},\ \ \ \ \ L_{k,0}=2\text{ and}\ L_{k,1}=k.
\label{1.02}
\end{equation}

In addition, some matrices based transforms can be introduced for a given
sequence. Binomial transform is one of these transforms and there are also
other ones such as rising and falling binomial transforms(see [7-14]). Given
an integer sequence $X=\left\{ x_{0},x_{1},x_{2},\ldots \right\} ,$ the
binomial transform $B$ of the sequence $X$, $B\left( X\right) =\left\{
b_{n}\right\} ,$\ is given by%
\begin{equation*}
b_{n}=\sum_{i=0}^{n}\binom{n}{i}x_{i}.
\end{equation*}%
\ In [12], authors gave the application of the several class of transforms
to the $k$-Lucas sequence. For example, for $n\geq 1,\ $authors obtained
recurrence relation of the binomial transform for $k$-Lucas sequence%
\begin{equation*}
b_{k,n+1}=\left( 2+k\right) b_{k,n}-kb_{k,n-1},\ \ b_{k,0}=2\text{ and }%
b_{k,1}=k+2.
\end{equation*}%
Falcon [13] studied the iterated application of the some Binomial transforms
to the $k$-Fibonacci sequence. For example, author obtained recurrence
relation of the iterated binomial transform for $k$-Fibonacci sequence%
\begin{equation*}
c_{k,n+1}^{\left( r\right) }=\left( 2r+k\right) c_{k,n}^{\left( r\right)
}-\left( r^{2}+kr-1\right) c_{k,n-1}^{\left( r\right) },\ \ c_{k,0}^{\left(
r\right) }=0\text{ and }c_{k,1}^{\left( r\right) }=1.
\end{equation*}

Motivated by [13], the goal of this paper is to apply iteratly the binomial
transform to the $k$-Lucas sequence. Also, the properties of this transform
are found by recurrence relation. Finally, it is illustrated the relation
between of this transform and the iterated binomial transform of $k$%
-Fibonacci sequence by deriving new formulas.

\section{Iterated binomial transform of $k$-Lucas sequences}

In this section, we will mainly focus on iterated binomial transforms of $k$%
-Lucas sequences to get some important results. In fact, we will also
present the recurrence relation, Binet formula, summation, generating
function of the transform and relationships betweeen of the transform and
iterated binomial transform of $k$-Fibonacci sequence.

The iterated binomial transform of the $k$-Lucas sequences is demonstrated
by $B_{k}^{\left( r\right) }=\left\{ b_{k,n}^{\left( r\right) }\right\} ,$
where $b_{k,n}^{\left( r\right) }$ is obtained by applying $"r"$ times the
binomial transform to $k$-Lucas sequence. It is obvious that $%
b_{k,0}^{\left( r\right) }=2$ and $b_{k,1}^{\left( r\right) }=2r+k.$

\bigskip

The following lemma will be key of the proof of the next theorems.

\begin{lemma}
\label{lem1}For $n\geq 0$ and $r\geq 1,$ the following equality is hold:%
\begin{equation*}
b_{k,n+1}^{\left( r\right) }=b_{k,n}^{\left( r\right) }+\sum_{j=0}^{n}\binom{%
n}{j}b_{k,j+1}^{\left( r-1\right) }.
\end{equation*}
\end{lemma}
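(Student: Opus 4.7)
The plan is to use the recursive definition of the iterated binomial transform together with Pascal's rule. By definition, applying the binomial transform once more to the sequence $B_{k}^{(r-1)}$ yields
\[
b_{k,n}^{(r)}=\sum_{i=0}^{n}\binom{n}{i}b_{k,i}^{(r-1)},
\]
and this is the only identity I will need as input.

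First I would write out $b_{k,n+1}^{(r)}$ from the definition as $\sum_{i=0}^{n+1}\binom{n+1}{i}b_{k,i}^{(r-1)}$, and then apply the Pascal identity $\binom{n+1}{i}=\binom{n}{i}+\binom{n}{i-1}$ to split the sum into two pieces. For the first piece, the boundary term with $i=n+1$ drops out since $\binom{n}{n+1}=0$, so it collapses to $\sum_{i=0}^{n}\binom{n}{i}b_{k,i}^{(r-1)}$, which the definition recognizes as exactly $b_{k,n}^{(r)}$.

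For the second piece, the term with $i=0$ vanishes because $\binom{n}{-1}=0$, and a shift of index $j=i-1$ converts the remaining sum into $\sum_{j=0}^{n}\binom{n}{j}b_{k,j+1}^{(r-1)}$, which is precisely the sum on the right-hand side of the claim. Combining these two pieces yields the asserted equality.

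There is no real obstacle here; the argument is a one-line manipulation and the only care needed is in handling the boundary indices $i=-1$ and $i=n+1$ so that nothing is over- or undercounted when applying Pascal's rule. The lemma will then be available to derive the two-term recurrence for $b_{k,n}^{(r)}$ in the subsequent theorems by eliminating the auxiliary $b^{(r-1)}$ terms.
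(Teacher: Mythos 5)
Your proposal is correct and follows essentially the same route as the paper: write $b_{k,n+1}^{(r)}$ from the definition, split via Pascal's rule, and handle the boundary terms $\binom{n}{n+1}=0$ and $\binom{n}{-1}=0$ before shifting the index. If anything, your version is slightly cleaner, since the paper first peels off the $j=0$ term only to reabsorb it later, which your direct splitting avoids.
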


\begin{proof}
By using definition of binomial transform and the well known binomial
equality 
\begin{equation*}
\binom{n+1}{i}=\binom{n}{i}+\binom{n}{i-1},
\end{equation*}%
we obtain%
\begin{eqnarray*}
b_{k,n+1}^{\left( r\right) } &=&\sum_{j=0}^{n+1}\binom{n+1}{j}%
b_{k,j}^{\left( r-1\right) } \\
&=&\sum_{j=1}^{n+1}\binom{n+1}{j}b_{k,j}^{\left( r-1\right)
}+b_{k,0}^{\left( r-1\right) }
\end{eqnarray*}%
\begin{eqnarray*}
b_{k,n+1}^{\left( r\right) } &=&\sum_{j=1}^{n+1}\binom{n}{j}b_{k,j}^{\left(
r-1\right) }+\sum_{j=1}^{n+1}\binom{n}{j-1}b_{k,j}^{\left( r-1\right)
}+b_{k,0}^{\left( r-1\right) } \\
&=&\sum_{j=0}^{n+1}\binom{n}{j}b_{k,j}^{\left( r-1\right) }+\sum_{j=0}^{n+1}%
\binom{n}{j-1}b_{k,j}^{\left( r-1\right) } \\
&=&\sum_{j=0}^{n}\binom{n}{j}b_{k,j}^{\left( r-1\right) }+\sum_{j=-1}^{n}%
\binom{n}{j}b_{k,j+1}^{\left( r-1\right) } \\
&=&b_{k,n}^{\left( r\right) }+\sum_{j=0}^{n}\binom{n}{j}b_{k,j+1}^{\left(
r-1\right) }
\end{eqnarray*}%
which is desired result.
\end{proof}

In \cite{BhadouriaJhalaSingh}, the authors obtained the following equality
for binomial transform of $k$-Lucas sequences. However, in here, we obtain
the equality in terms of iterated binomial transform of the $k$-Lucas
sequences as a consequence of Lemma 2.1. To do that we take $r=1$ in Lemma
2.1: 
\begin{equation*}
b_{k,n+1}=b_{k,n}+\sum_{j=0}^{n}\binom{n}{j}L_{k,j+1}.
\end{equation*}

\begin{theorem}
\label{teo1}For $n\geq 0$ and $r\geq 1,$ the recurrence relation of sequence 
$\left\{ b_{k,n}^{\left( r\right) }\right\} $ is%
\begin{equation}
b_{k,n+1}^{\left( r\right) }=\left( 2r+k\right) b_{k,n}^{\left( r\right)
}-\left( r^{2}+kr-1\right) b_{k,n-1}^{\left( r\right) },  \label{2.1}
\end{equation}%
with initial conditions $b_{k,0}^{\left( r\right) }=2$ and $b_{k,1}^{\left(
r\right) }=2r+k.$
\end{theorem}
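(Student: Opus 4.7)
The plan is to prove the recurrence by induction on $r$, letting Lemma \ref{lem1} do most of the work. The base case $r=1$ is precisely the recurrence $b_{k,n+1}=(2+k)b_{k,n}-k\,b_{k,n-1}$ recalled from \cite{BhadouriaJhalaSingh} in the introduction. The initial conditions $b_{k,0}^{(r)}=2$ and $b_{k,1}^{(r)}=2r+k$ are easy: from the definition of the binomial transform, $b_{k,0}^{(r)}=b_{k,0}^{(r-1)}=\cdots=L_{k,0}=2$, and Lemma \ref{lem1} with $n=0$ gives $b_{k,1}^{(r)}=b_{k,0}^{(r)}+b_{k,1}^{(r-1)}=2+b_{k,1}^{(r-1)}$, which by induction on $r$ equals $2+(2(r-1)+k)=2r+k$.

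For the inductive step, I assume that the recurrence holds at level $r-1$. Applying Lemma \ref{lem1} yields
$$b_{k,n+1}^{(r)}-b_{k,n}^{(r)}=\sum_{j=0}^{n}\binom{n}{j}b_{k,j+1}^{(r-1)}.$$
Splitting $\binom{n}{j}=\binom{n-1}{j-1}+\binom{n-1}{j}$ by Pascal's identity and re-indexing the first resulting sum via $i=j-1$ rewrites the right-hand side as
$$\sum_{j=0}^{n-1}\binom{n-1}{j}\!\left(b_{k,j+2}^{(r-1)}+b_{k,j+1}^{(r-1)}\right)\!.$$
I then apply the induction hypothesis at level $r-1$ to replace $b_{k,j+2}^{(r-1)}$ by $(2r+k-2)\,b_{k,j+1}^{(r-1)}-(r^{2}+kr-2r-k)\,b_{k,j}^{(r-1)}$, which turns the sum into a linear combination of $\sum_{j}\binom{n-1}{j}b_{k,j+1}^{(r-1)}$ and $\sum_{j}\binom{n-1}{j}b_{k,j}^{(r-1)}$. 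The first of these equals $b_{k,n}^{(r)}-b_{k,n-1}^{(r)}$ by Lemma \ref{lem1} with $n$ replaced by $n-1$, and the second equals $b_{k,n-1}^{(r)}$ by the very definition of the binomial transform.

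Substituting back produces the equation
$$b_{k,n+1}^{(r)}-b_{k,n}^{(r)}=(2r+k-1)\!\left(b_{k,n}^{(r)}-b_{k,n-1}^{(r)}\right)-(r^{2}+kr-2r-k)\,b_{k,n-1}^{(r)},$$
which rearranges to the desired identity (\ref{2.1}). The only real obstacle is bookkeeping: one must expand $(r-1)^{2}+k(r-1)-1=r^{2}+kr-2r-k$ carefully and verify that $(2r+k-1)+(r^{2}+kr-2r-k)$ collapses exactly to $r^{2}+kr-1$, while the coefficient of $b_{k,n}^{(r)}$ aggregates to $1+(2r+k-1)=2r+k$. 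Both checks are routine, so the entire argument is driven by Lemma \ref{lem1} together with one application of Pascal's rule and one application of the induction hypothesis.
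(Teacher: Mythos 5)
Your proof is correct, and while it rests on the same two pillars as the paper's proof --- Lemma \ref{lem1} and induction on $r$ with the level-$(r-1)$ recurrence --- it organizes them differently and more economically. The paper first substitutes the $(r-1)$-level recurrence into $\sum_{j}\binom{n}{j}\bigl(b_{k,j}^{(r-1)}+b_{k,j+1}^{(r-1)}\bigr)$ and is then left with a stray sum $\sum_{j=1}^{n}\binom{n}{j}b_{k,j-1}^{(r-1)}$ that it cannot identify directly; it eliminates that sum by deriving an auxiliary identity (its equation (\ref{2.2})), shifting $n\to n-1$ in it, massaging with Pascal's rule, and subtracting the two resulting equations. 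You instead apply Pascal's rule \emph{first}, rewriting $\sum_{j=0}^{n}\binom{n}{j}b_{k,j+1}^{(r-1)}$ as $\sum_{j=0}^{n-1}\binom{n-1}{j}\bigl(b_{k,j+2}^{(r-1)}+b_{k,j+1}^{(r-1)}\bigr)$, so that after one application of the induction hypothesis both remaining sums are recognizable on the spot: $\sum_{j=0}^{n-1}\binom{n-1}{j}b_{k,j+1}^{(r-1)}=b_{k,n}^{(r)}-b_{k,n-1}^{(r)}$ by Lemma \ref{lem1} at $n-1$, and $\sum_{j=0}^{n-1}\binom{n-1}{j}b_{k,j}^{(r-1)}=b_{k,n-1}^{(r)}$ by definition. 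This removes the need for any auxiliary equation or elimination step, and the coefficient checks $(2r+k-1)+(r^{2}+kr-2r-k)=r^{2}+kr-1$ and $1+(2r+k-1)=2r+k$ are exactly right. A further small improvement: you actually justify the initial conditions (via $b_{k,0}^{(r)}=L_{k,0}=2$ and Lemma \ref{lem1} at $n=0$), which the paper merely asserts as obvious. The only caveat, shared with the paper, is that the recurrence step genuinely applies for $n\geq 1$ (at $n=0$ the identity would involve $b_{k,-1}^{(r)}$), so the $n=0$ case reduces to the initial conditions you verified.
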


\begin{proof}
The proof will be done by induction steps on $r$ and $n$.

First of all, for $r=1,$ from the Equality 2.2 in \cite{BhadouriaJhalaSingh}%
, it is true $b_{k,n+1}=\left( 2+k\right) b_{k,n}-kb_{k,n-1}.$

Let us consider definition of iterated binomial transform, then we have 
\begin{equation*}
b_{k,2}^{\left( r\right) }=k^{2}+2rk+2r^{2}+2.
\end{equation*}%
The initial conditions are%
\begin{equation*}
b_{k,0}^{\left( r\right) }=2\text{ and }b_{k,1}^{\left( r\right) }=2r+k.
\end{equation*}%
Hence, for $n=1,$ the equality (\ref{2.1}) is true, that is $b_{k,2}^{\left(
r\right) }=\left( 2r+k\right) b_{k,1}^{\left( r\right) }-\left(
r^{2}+kr-1\right) b_{k,0}^{\left( r\right) }.$

Actually, by assuming the equation in (\ref{2.1}) holds for all $(r-1,n)$
and $(r,n-1),$ that is,%
\begin{equation*}
b_{k,n+1}^{\left( r-1\right) }=\left( 2r-2+k\right) b_{k,n}^{\left(
r-1\right) }-\left( \left( r-1\right) ^{2}+k\left( r-1\right) -1\right)
b_{k,n-1}^{\left( r-1\right) },
\end{equation*}%
and%
\begin{equation*}
b_{k,n}^{\left( r\right) }=\left( 2r+k\right) b_{k,n-1}^{\left( r\right)
}-\left( r^{2}+kr-1\right) b_{k,n-2}^{\left( r\right) }.
\end{equation*}%
Now, by taking account Lemma 2.1, we obtain%
\begin{eqnarray*}
b_{k,n+1}^{\left( r\right) } &=&b_{k,n}^{\left( r\right) }+\sum_{j=0}^{n}%
\binom{n}{j}b_{k,j+1}^{\left( r-1\right) } \\
&=&\sum_{j=0}^{n}\binom{n}{j}b_{k,j}^{\left( r-1\right) }+\sum_{j=0}^{n}%
\binom{n}{j}b_{k,j+1}^{\left( r-1\right) } \\
&=&\sum_{j=1}^{n}\binom{n}{j}\left( b_{k,j}^{\left( r-1\right)
}+b_{k,j+1}^{\left( r-1\right) }\right) +b_{k,0}^{\left( r-1\right)
}+b_{k,1}^{\left( r-1\right) }.
\end{eqnarray*}%
By reconsidering our assumption, we write%
\begin{eqnarray*}
b_{k,n+1}^{\left( r\right) } &=&\sum_{j=1}^{n}\binom{n}{j}\left(
b_{k,j}^{\left( r-1\right) }+\left( 2r-2+k\right) b_{k,j}^{\left( r-1\right)
}-\left( r^{2}-2r+kr-k\right) b_{k,j-1}^{\left( r-1\right) }\right)
+b_{k,0}^{\left( r-1\right) }+b_{k,1}^{\left( r-1\right) } \\
&=&\left( 2r+k-1\right) \sum_{j=1}^{n}\binom{n}{j}b_{k,j}^{\left( r-1\right)
}-\left( r^{2}-2r+kr-k\right) \sum_{j=1}^{n}\binom{n}{j}b_{k,j-1}^{\left(
r-1\right) }+b_{k,0}^{\left( r-1\right) }+b_{k,1}^{\left( r-1\right) } \\
&=&\left( 2r+k-1\right) \sum_{j=0}^{n}\binom{n}{j}b_{k,j}^{\left( r-1\right)
}-\left( r^{2}-2r+kr-k\right) \sum_{j=1}^{n}\binom{n}{j}b_{k,j-1}^{\left(
r-1\right) }+b_{k,0}^{\left( r-1\right) }+b_{k,1}^{\left( r-1\right) } \\
&&-\left( 2r+k-1\right) b_{k,0}^{\left( r-1\right) } \\
&=&\left( 2r+k-1\right) b_{k,n}^{\left( r\right) }-\left(
r^{2}-2r+kr-k\right) \sum_{j=1}^{n}\binom{n}{j}b_{k,j-1}^{\left( r-1\right)
}+\left( 2-2r-k\right) b_{k,0}^{\left( r-1\right) }+b_{k,1}^{\left(
r-1\right) }.
\end{eqnarray*}%
Then we have%
\begin{equation}
b_{k,n+1}^{\left( r\right) }-\left( 2r+k-1\right) b_{k,n}^{\left( r\right)
}=-\left( r^{2}-2r+kr-k\right) \sum_{j=1}^{n}\binom{n}{j}b_{k,j-1}^{\left(
r-1\right) }+4-2r-k.  \label{2.2}
\end{equation}%
By taking $n\rightarrow n-1,$ it is%
\begin{eqnarray*}
b_{k,n}^{\left( r\right) } &=&\left( 2r+k-1\right) b_{k,n-1}^{\left(
r\right) }-\left( r^{2}-2r+kr-k\right) \sum_{j=1}^{n-1}\binom{n-1}{j}%
b_{k,j-1}^{\left( r-1\right) }+4-2r-k \\
&=&\left( 2r+k-1\right) b_{k,n-1}^{\left( r\right) }-\left(
r^{2}-2r+kr-k\right) \sum_{j=1}^{n}\left[ \binom{n}{j}-\binom{n-1}{j-1}%
\right] b_{k,j-1}^{\left( r-1\right) }+4-2r-k \\
&=&\left( 2r+k-1\right) b_{k,n-1}^{\left( r\right) }-\left(
r^{2}-2r+kr-k\right) \sum_{j=1}^{n}\binom{n}{j}b_{k,j-1}^{\left( r-1\right) }
\\
&&+\left( r^{2}-2r+kr-k\right) \sum_{j=1}^{n}\binom{n-1}{j-1}%
b_{k,j-1}^{\left( r-1\right) }+4-2r-k
\end{eqnarray*}%
\begin{eqnarray*}
b_{k,n}^{\left( r\right) } &=&\left( 2r+k-1\right) b_{k,n-1}^{\left(
r\right) }-\left( r^{2}-2r+kr-k\right) \sum_{j=1}^{n}\binom{n}{j}%
b_{k,j-1}^{\left( r-1\right) } \\
&&+\left( r^{2}-2r+kr-k\right) \sum_{j=0}^{n-1}\binom{n-1}{j}b_{k,j}^{\left(
r-1\right) }+4-2r-k \\
&=&\left( 2r+k-1\right) b_{k,n-1}^{\left( r\right) }-\left(
r^{2}-2r+kr-k\right) \sum_{j=1}^{n}\binom{n}{j}b_{k,j-1}^{\left( r-1\right) }
\\
&&+\left( r^{2}-2r+kr-k\right) b_{k,n-1}^{\left( r\right) }+4-2r-k \\
&=&\left( r^{2}+kr-1\right) b_{k,n-1}^{\left( r\right) }-\left(
r^{2}-2r+kr-k\right) \sum_{j=1}^{n}\binom{n}{j}b_{k,j-1}^{\left( r-1\right)
}+4-2r-k.
\end{eqnarray*}%
Hence, we have%
\begin{equation*}
b_{k,n}^{\left( r\right) }-\left( r^{2}+kr-1\right) b_{k,n-1}^{\left(
r\right) }=-\left( r^{2}-2r+kr-k\right) \sum_{j=1}^{n}\binom{n}{j}%
b_{k,j-1}^{\left( r-1\right) }+4-2r-k.
\end{equation*}%
If last expression put in place in the equality (\ref{2.2}), then we get%
\begin{eqnarray*}
b_{k,n+1}^{\left( r\right) } &=&\left( 2r+k-1\right) b_{k,n}^{\left(
r\right) }+b_{k,n}^{\left( r\right) }-\left( r^{2}+kr-1\right)
b_{k,n-1}^{\left( r\right) } \\
&=&\left( 2r+k\right) b_{k,n}^{\left( r\right) }-\left( r^{2}+kr-1\right)
b_{k,n-1}^{\left( r\right) }
\end{eqnarray*}%
which is completed the proof of this theorem.
\end{proof}

The characteristic equation of sequence $\left\{ b_{k,n}^{\left( r\right)
}\right\} $ in (\ref{2.1}) is \linebreak $\lambda ^{2}-\left( 2r+k\right)
\lambda +r^{2}+kr-1=0.$ Let be $\lambda _{1}$ and $\lambda _{2}$ the roots
of this equation. Then, Binet's formulas of sequence $\left\{
b_{k,n}^{\left( r\right) }\right\} $ can be expressed as%
\begin{equation}
b_{k,n}^{\left( r\right) }=\left( \frac{k+\sqrt{k^{2}+4}}{2}+r\right)
^{n}+\left( \frac{k-\sqrt{k^{2}+4}}{2}+r\right) ^{n}.  \label{2.30}
\end{equation}%
\qquad \qquad \qquad \qquad

In here, we obtain the equalities given in [12] in terms of iterated
binomial transform of the $k$-Lucas sequences as a consequence of Theorem
2.1. To do that we take $r=1$ in Theorem 2.1 and \ the equation (\ref{2.30}%
): 
\begin{equation*}
b_{k,n+1}=\left( 2+k\right) b_{k,n}-kb_{k,n-1},
\end{equation*}%
and%
\begin{equation*}
b_{k,n}=\left( \frac{k+2+\sqrt{k^{2}+4}}{2}\right) ^{n}+\left( \frac{k+2-%
\sqrt{k^{2}+4}}{2}\right) ^{n}.
\end{equation*}

Now, we give the sum of iterated binomial transform for $k$-Lucas sequences.

\begin{theorem}
Sum of $\ $sequence $\left\{ b_{k,n}^{\left( r\right) }\right\} $ is%
\begin{equation*}
\sum_{i=0}^{n-1}b_{k,i}^{\left( r\right) }=\frac{\left( r^{2}+kr-1\right)
b_{k,n-1}^{\left( r\right) }-b_{k,n}^{\left( r\right) }-k-2r+2}{r^{2}+kr-k-2r%
}.
\end{equation*}
\end{theorem}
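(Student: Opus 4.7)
The plan is to derive the closed form for $S := \sum_{i=0}^{n-1} b_{k,i}^{(r)}$ by exploiting the linear recurrence established in Theorem \ref{teo1}, which we can telescope into an equation involving $S$ itself together with boundary terms.

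First I would write the recurrence $b_{k,i+1}^{(r)} = (2r+k) b_{k,i}^{(r)} - (r^{2}+kr-1) b_{k,i-1}^{(r)}$ and sum it over $i = 1, 2, \ldots, n-1$. Each of the three resulting sums is almost $S$, differing only by a few endpoint terms: shifting indices gives
\begin{equation*}
\sum_{i=1}^{n-1} b_{k,i+1}^{(r)} = S - b_{k,0}^{(r)} - b_{k,1}^{(r)} + b_{k,n}^{(r)}, \quad \sum_{i=1}^{n-1} b_{k,i}^{(r)} = S - b_{k,0}^{(r)}, \quad \sum_{i=1}^{n-1} b_{k,i-1}^{(r)} = S - b_{k,n-1}^{(r)}.
\end{equation*}

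Next I would substitute these three identities back into the summed recurrence and collect the $S$-terms on one side. The coefficient of $S$ on that side becomes $1 - (2r+k) + (r^{2}+kr-1) = r^{2}+kr-k-2r$, which is exactly the denominator appearing in the claimed formula; this is a reassuring sanity check. Isolating $S$ produces
\begin{equation*}
(r^{2}+kr-k-2r)\, S = (r^{2}+kr-1) b_{k,n-1}^{(r)} - b_{k,n}^{(r)} + (1 - 2r - k)\, b_{k,0}^{(r)} + b_{k,1}^{(r)}.
\end{equation*}

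Finally I would plug in the initial values $b_{k,0}^{(r)} = 2$ and $b_{k,1}^{(r)} = 2r+k$ to simplify $(1-2r-k)\cdot 2 + (2r+k) = 2 - 2r - k$, which matches the constant term $-k - 2r + 2$ in the target formula, and then divide by $r^{2}+kr-k-2r$ (tacitly assuming it is nonzero, which is the only genuine constraint). The whole argument is routine bookkeeping; the only place a reader might slip is in the index-shifting for the three sums, so I would make those substitutions explicit rather than combining them on the fly.
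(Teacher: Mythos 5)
Your proof is correct, but it takes a genuinely different route from the paper. The paper starts from the Binet formula $b_{k,n}^{(r)}=\lambda_1^{n}+\lambda_2^{n}$ (equation (\ref{2.30})), sums the two geometric series to get $\bigl(\lambda_1^{n}-1\bigr)/\bigl(\lambda_1-1\bigr)+\bigl(\lambda_2^{n}-1\bigr)/\bigl(\lambda_2-1\bigr)$, and then simplifies using $\lambda_1\lambda_2=r^{2}+kr-1$ and $\lambda_1+\lambda_2=k+2r$. You instead sum the recurrence of Theorem \ref{teo1} over $i=1,\dots,n-1$ and solve the resulting linear equation for $S$; your index-shifting identities and the final bookkeeping all check out, and the constant term $(1-2r-k)\cdot 2+(2r+k)=2-2r-k$ matches the stated $-k-2r+2$. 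Your argument is more elementary in that it needs only the recurrence and the initial conditions, not the closed form, and it surfaces the one genuine hypothesis explicitly: the denominator $r^{2}+kr-k-2r$ must be nonzero. The paper's route hides that same condition inside the geometric-series step, since $(\lambda_1-1)(\lambda_2-1)=r^{2}+kr-1-(k+2r)+1=r^{2}+kr-k-2r$, so dividing by $\lambda_i-1$ imposes exactly the constraint you flag. What the paper's approach buys in exchange is that the intermediate expression in terms of $\lambda_1,\lambda_2$ generalizes immediately to other weighted sums (e.g.\ alternating or geometric-weighted sums), whereas the telescoping trick would have to be redone for each variant.
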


\begin{proof}
By considering equation (\ref{2.30}), we have%
\begin{equation*}
\sum\limits_{i=0}^{n-1}b_{k,i}^{\left( r\right)
}=\sum\limits_{i=0}^{n-1}\left( \lambda _{1}^{i}+\lambda _{2}^{i}\right) .
\end{equation*}%
Then we obtain%
\begin{equation*}
\sum\limits_{i=0}^{n-1}b_{k,i}^{\left( r\right) }=\left( \frac{\lambda
_{1}^{n}-1}{\lambda _{1}-1}\right) +\left( \frac{\lambda _{2}^{n}-1}{\lambda
_{2}-1}\right) .
\end{equation*}%
Afterward, by taking account equations $\lambda _{1}.\lambda _{2}=r^{2}+kr-1$
and $\lambda _{1}+\lambda _{2}=k+2r,$\ we conclude 
\begin{equation*}
\sum\limits_{i=0}^{n-1}b_{k,i}^{\left( r\right) }=\frac{\left(
r^{2}+kr-1\right) b_{k,n-1}^{\left( r\right) }-b_{k,n}^{\left( r\right)
}-k-2r+2}{r^{2}+kr-k-2r}.
\end{equation*}
\end{proof}

Note that, if we take $r=1$ in Theorem 2.2, we obtain the summation of
binomial transform for $k$-Lucas sequence:%
\begin{equation*}
\sum\limits_{i=0}^{n-1}b_{k,i}=b_{k,n}-kb_{k,n-1}+k
\end{equation*}

\begin{theorem}
\label{teo2}The generating function of the iterated binomial transform for $%
\left\{ L_{k,n}\right\} $\ is%
\begin{equation*}
\sum\limits_{i=0}^{\infty }b_{k,i}^{\left( r\right) }x^{i}=\frac{2-\left(
2r+k\right) x}{1-\left( 2r+k\right) x+\left( r^{2}+kr-1\right) x^{2}}.
\end{equation*}
\end{theorem}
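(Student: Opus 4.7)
The plan is to convert the two-term recurrence from Theorem \ref{teo1} into a functional equation for the generating function. Set
$$g(x) = \sum_{i=0}^{\infty} b_{k,i}^{(r)} x^{i},$$
and form the linear combination
$$\bigl[1-(2r+k)x+(r^{2}+kr-1)x^{2}\bigr]\,g(x).$$
Expanding each piece as a power series and collecting the coefficient of $x^{n}$ for $n \geq 2$ gives
$$b_{k,n}^{(r)} - (2r+k)\,b_{k,n-1}^{(r)} + (r^{2}+kr-1)\,b_{k,n-2}^{(r)},$$
which is exactly $0$ by the recurrence (\ref{2.1}). So only the $n=0$ and $n=1$ contributions survive.

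Next I would read off those two surviving terms from the initial conditions $b_{k,0}^{(r)} = 2$ and $b_{k,1}^{(r)} = 2r+k$. The constant term is simply $b_{k,0}^{(r)} = 2$, while the coefficient of $x$ is
$$b_{k,1}^{(r)} - (2r+k)\,b_{k,0}^{(r)} = (2r+k) - 2(2r+k) = -(2r+k).$$
Hence
$$\bigl[1-(2r+k)x+(r^{2}+kr-1)x^{2}\bigr]\,g(x) = 2 - (2r+k)x,$$
and dividing by the quadratic yields the claimed closed form.

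There is no substantive obstacle here; the argument is the standard generating-function extraction once the recurrence is in hand. The only place requiring care is the bookkeeping of the two residual terms left after the telescoping cancels the bulk of the series, so I would double-check the sign and the $(2r+k)$ coefficient arising from $b_{k,1}^{(r)} - (2r+k) b_{k,0}^{(r)}$. As an optional sanity check, one could instead substitute the Binet expression (\ref{2.30}) and sum the two resulting geometric series $\sum \lambda_{1}^{i} x^{i} + \sum \lambda_{2}^{i} x^{i}$ using $\lambda_{1}+\lambda_{2} = 2r+k$ and $\lambda_{1}\lambda_{2} = r^{2}+kr-1$; this gives the same rational function and confirms the computation.
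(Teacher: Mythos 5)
Your argument is correct and is essentially the paper's own proof: the paper likewise starts from the recurrence of Theorem \ref{teo1}, writes $b(k,x,r)=b_{k,0}^{(r)}+b_{k,1}^{(r)}x+\sum_{i\geq 2}\bigl((2r+k)b_{k,i-1}^{(r)}-(r^{2}+kr-1)b_{k,i-2}^{(r)}\bigr)x^{i}$, re-indexes to obtain the same functional equation $\bigl[1-(2r+k)x+(r^{2}+kr-1)x^{2}\bigr]b(k,x,r)=b_{k,0}^{(r)}+\bigl(b_{k,1}^{(r)}-(2r+k)b_{k,0}^{(r)}\bigr)x$, and solves. Your bookkeeping of the two residual terms, giving the numerator $2-(2r+k)x$, matches the paper exactly.
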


\begin{proof}
Assume that $b\left( k,x,r\right) =\sum\limits_{i=0}^{\infty
}b_{k,i}^{\left( r\right) }x^{i}$ is the generating function of the iterated
binomial transform for $\left\{ L_{k,n}\right\} $. From Theorem \ref{teo1},
we obtain 
\begin{eqnarray*}
b\left( k,x,r\right) &=&b_{k,0}^{\left( r\right) }+b_{k,1}^{\left( r\right)
}x+\sum\limits_{i=2}^{\infty }\left( \left( 2r+k\right) b_{k,i-1}^{\left(
r\right) }-\left( r^{2}+kr-1\right) b_{k,i-2}^{\left( r\right) }\right) x^{i}
\\
&=&b_{k,0}^{\left( r\right) }+b_{k,1}^{\left( r\right) }x-\left( 2r+k\right)
b_{k,0}^{\left( r\right) }x+\left( 2r+k\right) x\sum\limits_{i=0}^{\infty
}b_{k,i}^{\left( r\right) }x^{i} \\
&&-\left( r^{2}+kr-1\right) x^{2}\sum\limits_{i=0}^{\infty }b_{k,i}^{\left(
r\right) }x^{i} \\
&=&b_{k,0}^{\left( r\right) }+\left( b_{k,1}^{\left( r\right) }-\left(
2r+k\right) b_{k,0}^{\left( r\right) }\right) x+\left( 2r+k\right) xb\left(
k,x,r\right) \\
&&-\left( r^{2}+kr-1\right) x^{2}\,b\left( k,x,r\right) .
\end{eqnarray*}

Now rearrangement the equation implies that 
\begin{equation*}
b\left( k,x,r\right) =\frac{b_{k,0}^{\left( r\right) }+\left(
b_{k,1}^{\left( r\right) }-\left( 2r+k\right) b_{k,0}^{\left( r\right)
}\right) x}{1-\left( 2r+k\right) x+\left( r^{2}+kr-1\right) x^{2}},
\end{equation*}%
which equal to the $\sum\limits_{i=0}^{\infty }b_{k,i}^{\left( r\right)
}x^{i}$ in theorem. Hence the result.
\end{proof}

In here, we obtain the generating function given in [12] in terms of
iterated binomial transform of the $k$-Lucas sequences as a consequence of
Theorem 2.3. To do that we take $r=1$ in Theorem 2.3:%
\begin{equation*}
\sum\limits_{i=0}^{\infty }b_{k,i}x^{i}=\frac{2-\left( 2+k\right) x}{%
1-\left( 2+k\right) x+kx^{2}}.
\end{equation*}

In the following theorem, we present the relationship between the iterated
binomial transform of $k$-Lucas sequence and iterated binomial transform of $%
\ k$-Fibonacci sequence.

\begin{theorem}
\label{teo3}For $n>0$, the relationship of between the transforms $\left\{
b_{k,n}^{\left( r\right) }\right\} $ and $\left\{ c_{k,n}^{\left( r\right)
}\right\} $ is illustrated by following way:%
\begin{equation}
b_{k,n}^{\left( r\right) }=c_{k,n+1}^{\left( r\right) }-\left(
r^{2}+kr-1\right) c_{k,n-1}^{\left( r\right) },  \label{2.5}
\end{equation}%
where $b_{k,n}^{\left( r\right) }$ is the iterated binomial transform of $k$%
-Lucas sequence and $c_{k,n}^{\left( r\right) }$ is the iterated binomial
transform of $k$-Fibonacci sequence.
\end{theorem}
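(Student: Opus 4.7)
My plan is to exploit the fact that $\{b_{k,n}^{(r)}\}$ and $\{c_{k,n}^{(r)}\}$ satisfy the \emph{same} linear recurrence. Indeed, the recurrence for $c_{k,n}^{(r)}$ recorded in the introduction is
$c_{k,n+1}^{(r)} = (2r+k)\,c_{k,n}^{(r)} - (r^{2}+kr-1)\,c_{k,n-1}^{(r)}$,
which is identical in characteristic polynomial to the one established in Theorem~\ref{teo1} for $\{b_{k,n}^{(r)}\}$. The two sequences therefore share the same characteristic roots
$\lambda_{1,2} = \tfrac{k \pm \sqrt{k^{2}+4}}{2} + r$,
so any $\mathbb{C}$-linear combination of shifts of $c_{k,n}^{(r)}$ again solves the recurrence. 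In particular, the right-hand side of (\ref{2.5}) obeys the same second-order recurrence as the left-hand side, and the identity reduces to checking two initial values.

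To carry this out I would first record (or derive in one line, by the standard argument) the Binet formula for the $k$-Fibonacci iterated binomial transform, namely $c_{k,n}^{(r)} = \frac{\lambda_{1}^{n} - \lambda_{2}^{n}}{\lambda_{1} - \lambda_{2}}$, matching the initial values $c_{k,0}^{(r)} = 0$, $c_{k,1}^{(r)} = 1$. Combined with the Binet formula (\ref{2.30}) for $b_{k,n}^{(r)}$, the identity becomes the purely algebraic claim
\[
\lambda_{1}^{n} + \lambda_{2}^{n} \;=\; \frac{\lambda_{1}^{n+1} - \lambda_{2}^{n+1}}{\lambda_{1} - \lambda_{2}} - \lambda_{1}\lambda_{2}\cdot\frac{\lambda_{1}^{n-1} - \lambda_{2}^{n-1}}{\lambda_{1} - \lambda_{2}},
\]
using $\lambda_{1}\lambda_{2} = r^{2}+kr-1$. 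The numerator on the right factors as $(\lambda_{1}-\lambda_{2})(\lambda_{1}^{n}+\lambda_{2}^{n})$ after one line of manipulation, and the identity drops out.

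As a backup (and to avoid invoking an unstated Binet formula for $c$), I could instead do a direct two-base-case verification: for $n=1$ the right-hand side reads $c_{k,2}^{(r)} - (r^{2}+kr-1)\cdot 0 = 2r+k = b_{k,1}^{(r)}$, and for $n=2$ one computes $c_{k,3}^{(r)} - (r^{2}+kr-1) c_{k,1}^{(r)} = (2r+k)^{2} - 2(r^{2}+kr-1) = k^{2}+2rk+2r^{2}+2 = b_{k,2}^{(r)}$, as noted in the proof of Theorem~\ref{teo1}. An immediate induction on $n$, using the shared recurrence, then extends the identity to all $n>0$.

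I do not expect any serious obstacle; the main care is to justify cleanly that both sides satisfy the same recurrence (so a two-step base case suffices) and, if using the Binet route, to state the companion formula for $c_{k,n}^{(r)}$ whose recurrence and initial conditions make its derivation routine from the characteristic equation $\lambda^{2}-(2r+k)\lambda+(r^{2}+kr-1)=0$.
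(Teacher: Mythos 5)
Your proposal is correct, and its logical core coincides with the paper's: the identity (\ref{2.5}) is pinned down by two evaluations precisely because both sides obey one and the same second-order recurrence. The paper packages this as a method of undetermined coefficients --- it posits $b_{k,n}^{\left( r\right) }=Xc_{k,n+1}^{\left( r\right) }+Yc_{k,n-1}^{\left( r\right) }$, evaluates at $n=1$ and $n=2$, and solves the resulting linear system by Cramer's rule to obtain $X=1$ and $Y=-\left( r^{2}+kr-1\right) $; these are exactly the two checks in your backup route. What you add, and what the paper leaves implicit, is the justification that two instances suffice: you note explicitly that $\left\{ c_{k,n}^{\left( r\right) }\right\} $ satisfies $c_{k,n+1}^{\left( r\right) }=\left( 2r+k\right) c_{k,n}^{\left( r\right) }-\left( r^{2}+kr-1\right) c_{k,n-1}^{\left( r\right) }$ (as recorded in the introduction), with the same characteristic polynomial as in Theorem \ref{teo1}, so the right-hand side of (\ref{2.5}) satisfies the recurrence of the left-hand side and an induction on $n$ from the two base cases closes the argument. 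Your primary route through the Binet formulas $b_{k,n}^{\left( r\right) }=\lambda _{1}^{n}+\lambda _{2}^{n}$ and $c_{k,n}^{\left( r\right) }=(\lambda _{1}^{n}-\lambda _{2}^{n})/(\lambda _{1}-\lambda _{2})$ is an equivalent, fully self-contained verification: the numerator manipulation you sketch is the standard identity $\lambda _{1}^{n+1}-\lambda _{2}^{n+1}-\lambda _{1}\lambda _{2}\left( \lambda _{1}^{n-1}-\lambda _{2}^{n-1}\right) =\left( \lambda _{1}-\lambda _{2}\right) \left( \lambda _{1}^{n}+\lambda _{2}^{n}\right) $, and your base-case computations ($2r+k$ at $n=1$ and $k^{2}+2rk+2r^{2}+2$ at $n=2$) match the paper's values. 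Either variant of your argument is, if anything, more complete than the proof printed in the paper.
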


\begin{proof}
By using the equality in (\ref{2.5}), let be%
\begin{equation*}
b_{k,n}^{\left( r\right) }=Xc_{k,n+1}^{\left( r\right) }+Yc_{k,n-1}^{\left(
r\right) }.
\end{equation*}%
If we take $n=1$ and $2$, we have the system%
\begin{equation*}
\left\{ 
\begin{array}{c}
b_{k,1}^{\left( r\right) }=Xc_{k,2}^{\left( r\right) }+Yc_{k,0}^{\left(
r\right) }, \\ 
b_{k,2}^{\left( r\right) }=Xc_{k,3}^{\left( r\right) }+Yc_{k,1}^{\left(
r\right) }.%
\end{array}%
\right.
\end{equation*}%
By considering definition of the iterated binomial transforms for $k$-Lucas, 
$k$-Fibonacci sequence and Cramer rule for the system, we obtain%
\begin{equation*}
\left\{ 
\begin{array}{c}
2r+k=\left( 2r+k\right) X, \\ 
k^{2}+2rk+2r^{2}+2=\left( 3r^{2}+3rk+k^{2}+1\right) X+Y%
\end{array}%
\right.
\end{equation*}%
and%
\begin{equation*}
X=1\text{ and}\ Y=-\left( r^{2}+kr-1\right)
\end{equation*}%
which is completed the proof of this theorem.
\end{proof}

Note that, if we take $r=1$ in Theorem 2.4, we obtain the relationship of
between the binomial transform for $k$-Lucas sequence and the binomial
transform for $k$-Fibonacci sequence:%
\begin{equation*}
b_{k,n}=c_{k,n+1}-kc_{k,n-1}.
\end{equation*}

\begin{corollary}
We should note that choosing $k=1$ in the all results of section 2, it is
actually obtained some properties of the iterated binomial transform for
classical Lucas sequence such that the recurrence relation, Binet formula,
summation, generating function and relationship of between binomial
transforms for Fibonacci and Lucas sequences.
\end{corollary}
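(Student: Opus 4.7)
The plan is to view Corollary 2.1 as a routine specialization of the Section 2 results at the parameter value $k=1$. The key observation is that the defining recurrence $L_{k,n+1}=kL_{k,n}+L_{k,n-1}$ with $L_{k,0}=2$, $L_{k,1}=k$ collapses at $k=1$ to $L_{n+1}=L_n+L_{n-1}$ with $L_0=2$, $L_1=1$, i.e.\ the classical Lucas recurrence. Since $b_{k,n}^{(r)}$ is defined purely as the $r$-fold binomial transform of $\{L_{k,n}\}$, the sequence $b_{1,n}^{(r)}$ is therefore exactly the $r$-fold iterated binomial transform of the classical Lucas numbers, so every property already proved for $b_{k,n}^{(r)}$ restricts to a property of the classical Lucas iterated binomial transform.

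With this identification in hand, I would simply substitute $k=1$ into each earlier result. Theorem \ref{teo1} specializes to the recurrence $b_{1,n+1}^{(r)}=(2r+1)b_{1,n}^{(r)}-(r^{2}+r-1)b_{1,n-1}^{(r)}$ with $b_{1,0}^{(r)}=2$ and $b_{1,1}^{(r)}=2r+1$. The Binet formula (\ref{2.30}) becomes $b_{1,n}^{(r)}=\bigl(\tfrac{1+\sqrt{5}}{2}+r\bigr)^{n}+\bigl(\tfrac{1-\sqrt{5}}{2}+r\bigr)^{n}$, in which the golden ratio $\alpha$ reappears. Theorem 2.2 yields the summation $\sum_{i=0}^{n-1}b_{1,i}^{(r)}=\frac{(r^{2}+r-1)b_{1,n-1}^{(r)}-b_{1,n}^{(r)}-2r+1}{r^{2}-r-1}$. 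Theorem \ref{teo2} yields the generating function $\sum_{i\geq 0}b_{1,i}^{(r)}x^{i}=\frac{2-(2r+1)x}{1-(2r+1)x+(r^{2}+r-1)x^{2}}$. Finally, Theorem \ref{teo3} yields the Fibonacci--Lucas relation $b_{1,n}^{(r)}=c_{1,n+1}^{(r)}-(r^{2}+r-1)c_{1,n-1}^{(r)}$.

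No genuine obstacle arises, since every formula in Section 2 was established for arbitrary $k$, so the substitution $k=1$ is unconditional. The only small subtlety worth flagging is the summation denominator $r^{2}-r-1$, which must remain nonzero; this is automatic for every positive integer iteration count $r$, so no case distinction is required. In summary, the corollary consolidates five immediate specializations into a single statement, and its proof reduces to the identification of $\{L_{1,n}\}$ with the classical Lucas sequence together with the verbatim substitution $k=1$ in the five formulas above.
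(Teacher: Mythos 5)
Your proposal is correct and matches the paper's (implicit) treatment: the corollary is stated without proof precisely because it is the verbatim specialization $k=1$ of Theorems 2.1--2.4 and the Binet formula, which is exactly what you carry out, and your substituted formulas (including the denominator $r^{2}-r-1\neq 0$ for integer $r$) all check out. Nothing further is needed.
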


\begin{corollary}
We should note that choosing $k=2$ in the all results of section 2, it is
actually obtained some properties of the iterated binomial transform for
classical Pell-Lucas sequence such that the recurrence relation, Binet
formula, summation, generating function and relationship of between binomial
transforms for Pell and Pell-Lucas sequences.
\end{corollary}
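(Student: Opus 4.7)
The plan is to observe that the $k$-Lucas sequence at $k=2$ coincides with the classical Pell-Lucas sequence, and then to specialize every result of Section 2 at $k=2$. First, I would set $k=2$ in the defining recurrence (\ref{1.02}) to obtain $L_{2,n+1} = 2\,L_{2,n} + L_{2,n-1}$ with $L_{2,0}=2$ and $L_{2,1}=2$, which is exactly the defining data of the Pell-Lucas numbers $\{Q_n\}$. By construction, then, the iterated binomial transform $\{b_{2,n}^{(r)}\}$ is precisely the $r$-fold binomial transform of the Pell-Lucas sequence.

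With this identification in hand, each numbered result in Section 2 yields the corresponding fact for Pell-Lucas by direct substitution of $k=2$. Specifically, Theorem \ref{teo1} specializes to the recurrence $b_{2,n+1}^{(r)} = (2r+2)\,b_{2,n}^{(r)} - (r^{2}+2r-1)\,b_{2,n-1}^{(r)}$ with $b_{2,0}^{(r)}=2$ and $b_{2,1}^{(r)}=2r+2$; the Binet-type formula (\ref{2.30}) becomes $b_{2,n}^{(r)} = (1+\sqrt{2}+r)^{n} + (1-\sqrt{2}+r)^{n}$; Theorem 2.2 gives the summation formula with the factor $(r^{2}+2r-1)$ and denominator $r^{2}-2$ (from $r^{2}+kr-k-2r$ at $k=2$); Theorem \ref{teo2} provides the generating function
\begin{equation*}
\sum_{i=0}^{\infty} b_{2,i}^{(r)} x^{i} = \frac{2-(2r+2)x}{1-(2r+2)x+(r^{2}+2r-1)x^{2}};
\end{equation*}
and Theorem \ref{teo3} supplies the link $b_{2,n}^{(r)} = c_{2,n+1}^{(r)} - (r^{2}+2r-1)\,c_{2,n-1}^{(r)}$ between the iterated binomial transforms of Pell-Lucas and Pell.

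There is essentially no obstacle to overcome. The only point requiring minor attention is the initial matching of $\{L_{2,n}\}$ with the Pell-Lucas numbers, which is confirmed simply by comparing the two-term recurrence and the first two values. Beyond that, the corollary is a purely formal specialization: each assertion of Section 2 was established for an arbitrary parameter $k$, and so inherits its $k=2$ instance without any additional argument.
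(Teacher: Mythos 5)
Your proposal is correct and matches the paper's (implicit) argument: the corollary is stated without proof precisely because it is the direct specialization $k=2$ you describe, and all of your substituted formulas (the coefficients $2r+2$ and $r^{2}+2r-1$, the roots $1\pm\sqrt{2}+r$, the denominator $r^{2}-2$) check out. The only content beyond substitution is the identification of $\{L_{2,n}\}$ with the Pell-Lucas numbers and $\{F_{2,n}\}$ with the Pell numbers, which you verify correctly from the recurrence and initial values.
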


\end{document}